\documentclass[11pt]{article}

\usepackage[T1]{fontenc}
\usepackage[utf8]{inputenc}
\usepackage{lmodern}
\usepackage{microtype}
\usepackage[a4paper,margin=28mm]{geometry}
\usepackage{amsmath,amssymb,amsthm,mathtools,mathrsfs,needspace,array}
\usepackage{aliascnt}
\usepackage{xcolor}
\usepackage{hyperref}
\usepackage[nameinlink,capitalise,noabbrev]{cleveref}
\usepackage{fancyhdr}
\hypersetup{
  colorlinks=true,linkcolor=blue!45!black,citecolor=green!35!black,
  urlcolor=blue!55!black,
  pdftitle={Aspherical Symplectic Manifolds with Positive First Chern Degree},
  pdfauthor={Jingcao Wu},
  pdfsubject={A counterexample to the symplectic uniruledness question of BDPP},
  pdfkeywords={symplectic manifold, first Chern class, aspherical manifold, torus bundle, arithmetic Fuchsian group}
}
\newtheorem{theorem}{Theorem}[section]
\newaliascnt{proposition}{theorem}
\newtheorem{proposition}[proposition]{Proposition}
\aliascntresetthe{proposition}
\newaliascnt{lemma}{theorem}
\newtheorem{lemma}[lemma]{Lemma}
\aliascntresetthe{lemma}
\theoremstyle{remark}
\newaliascnt{remark}{theorem}
\newtheorem{remark}[remark]{Remark}
\aliascntresetthe{remark}

\newcommand{\C}{\mathbb C}
\newcommand{\Q}{\mathbb Q}
\newcommand{\R}{\mathbb R}
\newcommand{\Z}{\mathbb Z}
\newcommand{\HH}{\mathbb H}
\newcommand{\cO}{\mathcal O}
\newcommand{\ii}{\mathrm i}
\newcommand{\qi}{\mathbf i}
\newcommand{\qj}{\mathbf j}
\newcommand{\qk}{\mathbf k}
\DeclareMathOperator{\Trd}{Trd}
\DeclareMathOperator{\Nrd}{Nrd}
\DeclareMathOperator{\SL}{SL}
\DeclareMathOperator{\Id}{Id}
\numberwithin{equation}{section}

\title{\bfseries Aspherical Symplectic Manifolds\\with Positive First Chern Degree}
\author{Jingcao Wu\thanks{School of Mathematics, Shanghai University of Finance and Economics, Shanghai 200433, People's Republic of China. E-mail: \texttt{wujincao@shufe.edu.cn}.}}
\date{}

\begin{document}
\maketitle

\begin{abstract}
For every integer $n\ge5$, we construct a closed aspherical symplectic manifold of real dimension $2n$ with positive first Chern degree. Each example is a flat torus bundle over the same compact hyperbolic surface, with arithmetic monodromy arising from an explicit indefinite quaternion division algebra over $\Q$. We identify its symplectic first Chern class with the pullback of the canonical class of the base and compute its degree explicitly. Asphericity rules out nonconstant pseudoholomorphic spheres for every tame almost complex structure. Thus these examples give a negative answer to the symplectic uniruledness question of Boucksom--Demailly--P\u{a}un--Peternell in every even real dimension at least ten.
\end{abstract}

\medskip
\noindent\textbf{2020 Mathematics Subject Classification.}
Primary 53D35; Secondary 53D05, 57R20.

\noindent\textbf{Keywords.}
Symplectic uniruledness; first Chern class; aspherical manifolds.

\section{Introduction}

Boucksom--Demailly--P\u{a}un--Peternell (BDPP) asked whether positivity of the first Chern degree forces a compact symplectic manifold to be covered by rational pseudoholomorphic curves \cite[Question~5.3]{BDPP13}. More precisely, given a closed symplectic manifold $(M^{2n},\omega)$ and an $\omega$-compatible almost complex structure $J$, does
\begin{equation}\label{eq:bdpp-condition}
 \bigl\langle c_1(TM,J)\smile[\omega]^{n-1},[M]\bigr\rangle>0
\end{equation}
imply that $M$ is covered by images of nonconstant $J$-holomorphic maps from $\mathbb P^1$? The question asks whether the relationship between canonical positivity and uniruledness in complex geometry extends to the symplectic setting; it does not assume that $J$ is integrable.

We answer this question negatively in every even real dimension at least ten. Our examples satisfy the numerical hypothesis, yet their topology excludes all nonconstant pseudoholomorphic spheres, regardless of the choice of tame almost complex structure.

We write $c_1(M,\omega)$ for the first Chern class of an $\omega$-compatible almost complex structure. This class is independent of the choice of compatible structure, since the space of such structures is contractible. We call the pairing in \eqref{eq:bdpp-condition} the \emph{first Chern degree}; unlike a Chern number, it depends on the symplectic class $[\omega]$.

\begin{theorem}\label{thm:main}
There exists a compact Riemann surface $\Sigma$ of genus $g\ge2$ such that, for every integer $n\ge5$, there exist a closed connected symplectic manifold $(X_n^{2n},\omega_n)$ and a smooth fiber bundle
\[
 T^{2n-2}\longrightarrow X_n\xrightarrow{p_n}\Sigma
\]
with the following properties:
\begin{enumerate}
 \item the universal cover of $X_n$ is diffeomorphic to $\HH\times\R^{2n-2}$, and hence $X_n$ is aspherical;
 \item the first Chern class satisfies
 \[
  c_1(X_n,\omega_n)=p_n^*c_1(K_\Sigma)\quad\text{in }H^2(X_n;\Z);
 \]
 \item with the normalization of the fiber form specified below,
 \[
  \bigl\langle c_1(X_n,\omega_n)\smile[\omega_n]^{n-1},[X_n]\bigr\rangle=9(n-1)!(2g-2)>0.
 \]
\end{enumerate}
For every $\omega_n$-tame almost complex structure $J'$, every $J'$-holomorphic map $\mathbb P^1\to X_n$ is constant.
\end{theorem}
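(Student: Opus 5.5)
The construction is a flat symplectic torus bundle. Let $B=(-1,3)_\Q$ (or some explicit indefinite quaternion division algebra over $\Q$). Fix a maximal order $\cO\subset B$ and let $\Gamma=\cO^1$ be its norm-one group. Via $B\otimes\R\cong M_2(\R)$, $\Gamma$ is a cocompact arithmetic Fuchsian group; passing to a torsion-free finite-index subgroup $\Gamma'$ gives $\Sigma=\Gamma'\backslash\HH$ of genus $g\ge2$.

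\textbf{Monodromy.} For the fibre I need a representation $\rho:\Gamma'\to\mathrm{Sp}(2n-2,\Z)$ (with respect to some integral symplectic form) whose flat bundle carries a $\rho$-invariant complex structure twisted by the holomorphic tangent bundle of $\HH$. The natural building block is left multiplication of $\Gamma'$ on $\cO\cong\Z^4$. It preserves the form $\Trd(x\,\eta\,\bar y)$ for a suitable pure quaternion $\eta$, and over $\R$ it decomposes as $V\oplus V$ with $V=\R^2$ the standard representation. A rank-$2$ piece $\Z^2$ with trivial action preserves the standard form. Taking direct sums of copies of the $4$-dimensional module and trivial summands, and possibly a symmetric power, reaches every real dimension $2n-2\ge8$. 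This is presumably why the statement starts at $n\ge5$. The total space is $X_n=\widetilde\Sigma\times_{\rho}(\R^{2n-2}/\Lambda)$, where $\Lambda$ is the lattice. Part (1) is then immediate: the universal cover is $\HH\times\R^{2n-2}$.

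\textbf{Symplectic form.} Set $\omega_n=p_n^*\omega_\Sigma+\omega_F$, where $\omega_F$ is the flat extension of the invariant constant fibre form. It is closed because the connection is flat, and it is nondegenerate because the horizontal and vertical distributions split. For the Chern class, $TX_n=p_n^*T\Sigma\oplus E$, where $E$ is the flat vertical symplectic bundle with structure group $\rho(\Gamma')$. Hence $c_1(X_n)=-p_n^*c_1(K_\Sigma)\cdot(\pm1)+c_1(E)$.

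\textbf{Key step: computing $c_1(E)$.} For a flat $\mathrm{Sp}$-bundle, $c_1$ is the Toledo/Milnor--Wood-type invariant, determined by the composition $\Gamma'\to\mathrm{Sp}(2n-2,\R)\to$ its maximal compact subgroup. On the standard representation $V$ of $\SL_2(\R)$ it gives $c_1=\pm\frac12\cdot$ Euler number, since $V$-bundles pair to $T\HH^{1/2}$-type square roots. So each $V$ contributes $\pm(g-1)$ and trivial summands contribute $0$. The task is to choose the multiplicities and orientations so that
\[
c_1(E)=-c_1(T\Sigma)+c_1(K_\Sigma)=2c_1(K_\Sigma)
\]
up to conventions, i.e.\ total coefficient $2$ times $(2g-2)$ in units of $c_1(K_\Sigma)$. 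Sign control is the main obstacle. Flipping the symplectic form on a summand reverses its contribution, and I would track this via the compatible complex structures $J_z$ on $V$ parametrized by $z\in\HH$. Integrality in $H^2(X_n;\Z)$ holds because $H^2(\Sigma;\Z)$ is torsion-free and $p_n^*$ is injective (a section exists: the zero section).

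\textbf{Degree and the no-spheres claim.} For the degree, $c_1=p_n^*c_1(K_\Sigma)$ kills the $p_n^*\omega_\Sigma$ terms, so
\[
\bigl\langle c_1\smile[\omega_n]^{n-1},[X_n]\bigr\rangle=(2g-2)\int_{T^{2n-2}}\omega_F^{n-1}.
\]
Normalizing $\omega_F$ so that the fibre volume is $9(n-1)!$ (its Pfaffian covolume being $9$, coming from the discriminant of $\cO$ and the choice of $\eta$) gives the value in (3). Finally, any $J'$-holomorphic sphere $u$ lifts to the contractible universal cover, so $[u]=0$ in $\pi_2=0$. Its energy is $\int u^*\omega_n=0$, and tameness then forces $du=0$, so $u$ is constant.
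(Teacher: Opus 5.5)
Your architecture matches the paper's: left multiplication on an order of $(-1,3)_\Q$ preserving $\Trd(x\eta\bar y)$, trivial summands, the flat torus bundle, $TX_n\cong p_n^*T\Sigma\oplus E$ with a compatible $J$ built from $z\mapsto J_z$, fibre integration, and the $\pi_2=0$ energy argument. All of these are fine. The gap is that you defer the step that carries parts (2) and (3). You never show that some admissible choice gives $c_1(E)=2c_1(K_\Sigma)$.

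\textbf{The block sign.} Your device for sign control, reorienting individual copies of $V$, is not available. Since $B$ is a division algebra, the column splitting $B_\R=V\oplus V$ is not defined over $\Q$, and $\cO$ does not split along it. The $\Gamma$-invariant alternating forms on $B$ are exactly the $\Trd(x\eta\bar y)$ with $\eta$ pure. Over $\R$ these are $\det\otimes S$ for a symmetric form $S$ on the two-dimensional multiplicity space, with $\det S=\Nrd(\eta)$. If $\Nrd(\eta)<0$ (e.g.\ $\eta=\qj$, with $\Nrd(\qj)=-3$), then $S$ is indefinite. The two copies of $V$ then carry opposite symplectic signs, their contributions $\pm(g-1)$ cancel, and each quaternionic block has $c_1=0$. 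In that case no choice of multiplicities gives anything but $c_1(X_n)=-p_n^*c_1(K_\Sigma)$. So ``suitable'' must mean $\eta^2<0$, e.g.\ $\eta=\qi$. This is the content of the paper's identity $\Omega_B(x,y)=\tfrac12\bigl(\det(x_1,y_1)+\det(x_2,y_2)\bigr)$, with both coefficients positive. The only remaining freedom is the sign $\eta\mapsto-\eta$ of a whole block.

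\textbf{The plane sign.} You also leave the contribution of a positively oriented plane as $\pm(g-1)$. The paper fixes it by an automorphy computation. For the $dx\wedge dy$-compatible $I_z$, the $+\ii$-eigenline is spanned by $s(z)=(z,1)^{T}$, with $\gamma s(z)=(cz+d)\,s(\gamma z)$. Meanwhile $K_\Sigma$ has multiplier $(cz+d)^2$. Hence $(F,I)^{\otimes2}\cong K_\Sigma$ and each plane contributes $+(g-1)$. With $r$ blocks this gives $c_1(E)=r\,c_1(K_\Sigma)$, so the total first Chern class is $(r-1)$ times the pullback of $c_1(K_\Sigma)$. That forces exactly two quaternionic blocks plus trivial $\Z^2$ summands, and this (not symmetric powers) is why $n\ge5$. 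Without these two computations, parts (2) and (3) remain unproved.

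Two smaller points:
\begin{itemize}
\item Integrality in (2) follows from $TX_n\cong p_n^*(T\Sigma\oplus E)$ as complex bundles together with $H^2(\Sigma;\Z)\cong\Z$. Injectivity of $p_n^*$ only shows the class is nonzero.
\item The constant $9$ is the square of the Pfaffian $3$ of $-\tfrac12\Trd(\qi\bar xy)$ on $\Z\langle1,\qi,\qj,\qk\rangle$. A maximal order does not give this value without rescaling.
\end{itemize}
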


Thus BDPP's proposed conclusion fails even in the weaker form asserting the existence of a single nonconstant pseudoholomorphic sphere. In real dimension ten, the first Chern degree is $216(2g-2)$. Taking products with standard symplectic tori gives all higher even dimensions while preserving the first Chern class pulled back from the base.

\noindent\textbf{The construction.}
The arithmetic ingredients come from the theory of compact Shimura curves associated with indefinite quaternion division algebras; see \cite[Example~0.4 and \S5]{VZ04}, where families of false elliptic curves, namely abelian surfaces with quaternionic multiplication, are discussed. We use the underlying arithmetic representation to construct the integral monodromy of a flat torus bundle. Specifically, let
\[
B=\left(\frac{-1,3}{\Q}\right),\qquad\cO=\Z\langle1,\qi,\qj,\qi\qj\rangle,
\]
and choose an explicit torsion-free congruence subgroup $\Gamma$ of $\cO^1$. Left multiplication preserves an integral alternating form on $B\otimes_\Q\R$. Two copies of this representation, together with a trivial symplectic representation of real dimension $2n-10$, provide the monodromy of the required $T^{2n-2}$-bundle.

The key geometric step is to equip the flat real symplectic bundle with a compatible complex structure and determine the sign of its first Chern class. Let $F$ be the flat real plane bundle over $\Sigma=\Gamma\backslash\HH$ associated with the standard representation. We construct a compatible complex structure $I$ for which
\[
 (F,I)^{\otimes2}\simeq K_\Sigma.
\]
Each quaternionic summand contributes two copies of $(F,I)$, with the same symplectic sign. The complex bundle over $\Sigma$ associated with the vertical monodromy is $(F,I)^{\oplus4}\oplus\underline{\C}^{\,n-5}$, where $\underline{\C}^{\,n-5}$ is trivial. Its first Chern class is $2c_1(K_\Sigma)$, which combines with the contribution $-c_1(K_\Sigma)$ from $T\Sigma$.

Flat real plane bundles with nonzero Euler class are classical \cite{Milnor58}. Here the compatible complex structure is not parallel, so flatness of the real bundle does not force its first Chern class to vanish. The invariant integral lattice yields compact torus fibers, while asphericity and positivity of symplectic area exclude pseudoholomorphic spheres \cite{MS12}.

Hyperbolic geometry also features in the constructions of Fine--Panov \cite{FP10}, which produce non-K\"ahler symplectic Calabi--Yau manifolds and higher-dimensional symplectic Fano manifolds. For the natural almost complex structure on a hyperbolic twistor space, \cite[\S6.3, Lemma~42]{FP10} proves that every pseudoholomorphic sphere lies in a fiber. This prevents such curves from connecting points in distinct fibers, but does not rule out rational curves through individual points. By contrast, the manifolds in \cref{thm:main} admit no nonconstant pseudoholomorphic spheres for any tame almost complex structure. Their positivity is precisely the numerical condition \eqref{eq:bdpp-condition}: they are not symplectic Fano, since $c_1(X_n,\omega_n)^2=0$; see \cref{sec:consequences}.

More generally, using $r$ quaternionic summands gives symplectic manifolds $M_r^{4r+2}$ satisfying
\[
 c_1(M_r,\omega_{r,t})=(r-1)q_r^*c_1(K_\Sigma).
\]
For $r=1$, the first Chern class vanishes; for $r=2$, we obtain the ten-dimensional factor of $X_n=M_2\times T^{2n-10}$ used to prove \cref{thm:main}.

The paper is organized as follows. \Cref{sec:arithmetic} constructs the integral symplectic monodromy. \Cref{sec:plane} determines the compatible first Chern class, including its sign, through an automorphy-factor calculation. \Cref{sec:bundle} constructs the torus bundles and proves \cref{thm:main}. Finally, \cref{sec:consequences} studies the displayed almost complex structures, records the characteristic classes, and discusses the dimensions covered by the construction.

\section{An integral symplectic representation}\label{sec:arithmetic}

\subsection{The quaternion algebra and the base curve}

Let $B$ be the quaternion algebra over $\Q$ generated by $\qi$ and $\qj$ with
\[
 \qi^2=-1,\qquad \qj^2=3,\qquad \qi\qj=-\qj\qi,
\]
and put $\qk=\qi\qj$. Quaternionic conjugation sends $\qi,\qj,\qk$ to their negatives. The reduced trace and norm are
\[
 \Trd(x)=x+\bar x,\qquad \Nrd(x)=x\bar x.
\]
In coordinates,
\[
 \Nrd(a+b\qi+c\qj+d\qk)=a^2+b^2-3c^2-3d^2.
\]

\begin{lemma}
The algebra $B$ is a division algebra over $\Q$, and $B_\R:=B\otimes_\Q\R$ is isomorphic to $M_2(\R)$.
\end{lemma}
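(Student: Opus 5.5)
The plan is to use the standard criterion for a quaternion algebra $\left(\frac{a,b}{\Q}\right)$. It is a division algebra if and only if the reduced norm form is anisotropic, i.e.\ $\Nrd(x)=0$ forces $x=0$. Equivalently, the ternary form $aX^2+bY^2-abZ^2$ has no nontrivial rational zero. Indeed, if $B$ is not division, it is isomorphic to $M_2(\Q)$, which contains nonzero elements of determinant zero. Conversely, if $x\ne0$ and $x\bar x=\Nrd(x)\neq0$, then $\bar x/\Nrd(x)$ is a two-sided inverse of $x$. So it suffices to show that
\[
 a^2+b^2-3c^2-3d^2=0,\qquad a,b,c,d\in\Q,
\]
has only the trivial solution.

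For this, clear denominators so that $a,b,c,d\in\Z$ are not all divisible by $3$, and argue by descent modulo $3$. Reducing mod $3$ gives $a^2+b^2\equiv0\pmod 3$. Since $-1$ is not a square mod $3$, this forces $3\mid a$ and $3\mid b$. Then $9\mid a^2+b^2=3(c^2+d^2)$, so $3\mid c^2+d^2$, and the same argument gives $3\mid c$ and $3\mid d$. This contradicts the normalization, so the norm form is anisotropic and $B$ is a division algebra. (Equivalently, the Hilbert symbol $(-1,3)_3=-1$, since $-1$ is not a norm from $\Q_3(\sqrt3)$.)

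For the real statement, note that over $\R$ the element $\qj$ satisfies $\qj^2=3>0$. So $B_\R\cong\left(\frac{-1,3}{\R}\right)\cong\left(\frac{-1,1}{\R}\right)$, after rescaling $\qj$ by $1/\sqrt3$. This has the explicit model
\[
 \qi\mapsto\begin{pmatrix}0&-1\\1&0\end{pmatrix},\qquad
 \qj\mapsto\sqrt3\begin{pmatrix}1&0\\0&-1\end{pmatrix}.
\]
One checks directly that these matrices satisfy $\qi^2=-1$, $\qj^2=3$ and $\qi\qj=-\qj\qi$. The induced algebra map $B_\R\to M_2(\R)$ sends $1,\qi,\qj,\qk$ to linearly independent matrices, so it is an isomorphism by dimension count. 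Alternatively, the real norm form $a^2+b^2-3c^2-3d^2$ is isotropic, so $B_\R$ is split.

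The only step needing care is the descent modulo $3$; the rest is routine verification.
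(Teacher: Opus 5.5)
Your proposal is correct and follows the paper's proof: the same mod-$3$ descent on a primitive integral solution of $a^2+b^2=3(c^2+d^2)$, inversion via $\bar x/\Nrd(x)$, and the same explicit real splitting \eqref{eq:splitting}. Your extra remarks (the Wedderburn converse, the Hilbert symbol $(-1,3)_3=-1$, and isotropy of the real norm form) are accurate but not needed for the statement.
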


\begin{proof}
If a nonzero element of $B$ had zero reduced norm, clearing denominators and dividing by a common factor would give a primitive integral solution of
\[
 a^2+b^2=3(c^2+d^2).
\]
Reducing modulo $3$ gives $3\mid a$ and $3\mid b$. Substituting this back into the equation then gives $3\mid c$ and $3\mid d$, contradicting primitivity. Hence every nonzero element $x\in B$ has inverse $\bar x/\Nrd(x)$.

An explicit real splitting is given by
\begin{equation}\label{eq:splitting}
\qi\longmapsto\begin{pmatrix}0&-1\\1&0\end{pmatrix},\qquad\qj\longmapsto\begin{pmatrix}\sqrt3&0\\0&-\sqrt3\end{pmatrix}.
\end{equation}
These matrices satisfy the defining relations, and the images of $1,\qi,\qj,\qk$ form a real basis of $M_2(\R)$. Under this identification, reduced trace and norm correspond to matrix trace and determinant.
\end{proof}

The lattice $\cO=\Z\oplus\Z\qi\oplus\Z\qj\oplus\Z\qk$ is a subring of $B$ containing $1$, and hence an order. It is preserved by conjugation. Its group of norm-one units is
\[
 \cO^1=\{q\in\cO:\Nrd(q)=1\}.
\]
We view this group as a subgroup of $\SL_2(\R)$ by \eqref{eq:splitting}. Fix the principal congruence subgroup
\begin{equation}\label{eq:gamma}
 \Gamma=\{q\in\cO^1:q-1\in5\cO\}.
\end{equation}

\begin{lemma}
The subgroup $\Gamma$ is a torsion-free cocompact lattice in $\SL_2(\R)$. Its action on $\HH$ by fractional linear transformations is free, and
\[
 \Sigma=\Gamma\backslash\HH
\]
is a compact Riemann surface of genus $g\ge2$.
\end{lemma}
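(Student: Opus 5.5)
The proof splits into four steps: discreteness and cocompactness of $\cO^1$, torsion-freeness of $\Gamma$, freeness of the action on $\HH$, and the genus bound.

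\textbf{Step 1: $\cO^1$ is a cocompact lattice.} Discreteness is immediate. Under \eqref{eq:splitting}, $\cO$ is a lattice in $B_\R\cong M_2(\R)$, so $\cO^1$ is a discrete subgroup of $\SL_2(\R)$. For cocompactness I would invoke the classical theorem (Borel--Harish-Chandra, or the Godement compactness criterion in the quaternionic form found in Shimura and Katok): the norm-one group of an order in an indefinite quaternion \emph{division} algebra over $\Q$ is a cocompact lattice in $\SL_2(\R)$. The division property is exactly the previous lemma, and indefiniteness is $B_\R\cong M_2(\R)$. Alternatively one can argue directly: a noncompact quotient would contain unipotent elements, and $u\ne1$ unipotent in $\cO^1$ would give $u-1$ a nonzero nilpotent element of $B$, which is impossible in a division algebra. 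Since $\Gamma$ is the kernel of the reduction map $\cO^1\to(\cO/5\cO)^\times$, it has finite index in $\cO^1$. Hence $\Gamma$ is also a cocompact lattice.

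\textbf{Step 2: $\Gamma$ is torsion-free.} Suppose $q\in\Gamma$ has finite order with $q\ne\pm1$. Its eigenvalues are roots of unity $\zeta,\bar\zeta$ with $\zeta+\bar\zeta=\Trd(q)\in\Z$, so $\Trd(q)\in\{0,\pm1\}$. On the other hand, $q=1+5x$ with $x\in\cO$, and $\Trd(\cO)\subset2\Z$ because the trace of $a+b\qi+c\qj+d\qk$ is $2a$. This gives $\Trd(q)=2+10a$. Neither $0$ nor $\pm1$ has this form, a contradiction. The case $q=-1$ is excluded because $-2\notin5\cO$. Finally, if $\Trd(q)=\pm2$ then $q$ is either $\pm1$ or unipotent, and the latter was ruled out in Step 1. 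So $\Gamma$ has no torsion.

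\textbf{Step 3: the action on $\HH$ is free.} The stabilizer in $\SL_2(\R)$ of any point of $\HH$ is conjugate to $\mathrm{SO}(2)$, which is compact. Its intersection with the discrete group $\Gamma$ is therefore finite, hence trivial by Step 2. Together with proper discontinuity, which follows from discreteness, this shows that $\Sigma=\Gamma\backslash\HH$ is a smooth surface. It is a Riemann surface because $\Gamma$ acts by biholomorphisms, and it is compact by Step 1.

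\textbf{Step 4: the genus.} $\Sigma$ carries the hyperbolic metric descended from $\HH$. By Gauss--Bonnet, $2-2g=-\mathrm{Area}(\Sigma)/(2\pi)<0$, so $g\ge2$. One could compute $g$ exactly from the volume formula for $\cO^1$ and the index $[\cO^1:\Gamma]$, but this is not needed.

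The main obstacle is the cocompactness in Step 1. I would cite the classical theorem rather than reprove it, noting that its hypotheses are exactly those established in the previous lemma.
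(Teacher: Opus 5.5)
Your proposal is correct and follows the paper's proof essentially step for step. You cite the cocompactness theorem for norm-one groups of orders in indefinite quaternion division algebras and pass to the finite-index kernel of reduction modulo $5$, rule out torsion via $\Trd(q)\in\Z\cap[-2,2]$ together with $\Trd(q)\equiv2\pmod 5$, and obtain freeness from compact stabilizers and $g\ge2$ from Gauss--Bonnet.

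Two small remarks. Your parenthetical ``direct'' cocompactness argument via unipotents needs finite covolume as input, since discrete subgroups of infinite covolume can have noncompact quotient without parabolic elements, so it supplements rather than replaces the citation. Also, your separate treatment of $q=-1$ is redundant: the congruence modulo $5$ already excludes traces $-2,-1,0,1$, and for trace $2$ the diagonalizability of finite-order elements does the same job as your unipotent argument.
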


\begin{proof}
By the arithmetic lattice theorem for indefinite quaternion division algebras, the norm-one group of an order is a discrete cocompact subgroup of $\SL_2(\R)$; see \cite[\S5]{VZ04}. The subgroup $\Gamma$ is the kernel of the reduction homomorphism $\cO^1\to(\cO/5\cO)^\times$, whose target is finite. Hence $\Gamma$ is also a cocompact lattice.

To check torsion-freeness directly, suppose that $q\in\Gamma$ has finite order. Its matrix eigenvalues are roots of unity, and therefore
\[
 \Trd(q)\in\Z\cap[-2,2].
\]
The congruence $q\equiv1\pmod{5\cO}$ gives $\Trd(q)\equiv2\pmod5$. It follows that $\Trd(q)=2$. A finite-order matrix over a field of characteristic zero is diagonalizable, so both eigenvalues being $1$ implies $q=1$.

The stabilizer of any point of $\HH$ in $\SL_2(\R)$ is compact, and its intersection with a discrete torsion-free subgroup is trivial. Thus $\Gamma$ acts freely on $\HH$. The compact quotient inherits the hyperbolic metric, and Gauss--Bonnet yields $g\ge2$.
\end{proof}

\subsection{The invariant alternating form}

Define a bilinear form on $B_\R$ by
\begin{equation}\label{eq:quaternion-form}
 \Omega_B(x,y)=-\frac12\Trd(\qi\bar x y).
\end{equation}

\begin{lemma}\label{lem:form}
The form $\Omega_B$ is alternating, nondegenerate, and integral on $\cO$. Left multiplication by $\cO^1$ preserves both $\cO$ and $\Omega_B$. Under the identification \eqref{eq:splitting}, let $x_1,x_2$ and $y_1,y_2$ denote the respective columns of $x$ and $y$. Then
\begin{equation}\label{eq:columns}
 \Omega_B(x,y)=\frac12\bigl(\det(x_1,y_1)+\det(x_2,y_2)\bigr).
\end{equation}
In particular, $B_\R$ splits as two symplectic planes, each with one-half of the standard area form.
\end{lemma}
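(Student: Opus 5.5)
Most of the statement follows from formal properties of the reduced trace; the only real computation is the column formula \eqref{eq:columns}, which I would check on matrices.

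\emph{Alternating.} Since $\Trd(z)=\Trd(\bar z)$ and conjugation is an anti-involution,
\[
\Trd(\qi\bar x x)=\Nrd(x)\Trd(\qi)=0,
\]
because $\Nrd(x)$ is central and $\Trd(\qi)=0$. So $\Omega_B(x,x)=0$, and bilinearity gives alternation.

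\emph{Invariance.} For $q\in\cO^1$,
\[
\Omega_B(qx,qy)=-\tfrac12\Trd(\qi\bar x\bar q q y)=\Omega_B(x,y),
\]
since $\bar q q=\Nrd(q)=1$. Left multiplication preserves $\cO$ because $\cO$ is a ring.

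\emph{Integrality.} For $x,y\in\cO$ the element $\qi\bar x y$ lies in $\cO$, so it suffices to show that $\Trd(\cO)\subset 2\Z$. This holds because
\[
\Trd(a+b\qi+c\qj+d\qk)=2a.
\]

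\emph{Column formula.} Nondegeneracy will follow from \eqref{eq:columns}, so the remaining step is that formula. Under \eqref{eq:splitting}, conjugation $\bar x$ corresponds to the adjugate $\mathrm{adj}(x)$, because $x+\mathrm{adj}(x)=\mathrm{tr}(x)\Id$ and quaternionic conjugation is characterized by $x+\bar x=\Trd(x)$. Write $Q$ for the image of $\qi$. Then
\[
\Omega_B(x,y)=-\tfrac12\mathrm{tr}\bigl(Q\,\mathrm{adj}(x)\,y\bigr).
\]
To evaluate this, use $\mathrm{adj}(x)=J^{-1}x^TJ$, where $J=\begin{pmatrix}0&1\\-1&0\end{pmatrix}$. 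One checks directly that $Q=-J$, or equivalently $Q=J^{-1}$ (the same matrix). The key point is that $J^{-1}=Q$ commutes with $Q$. Using cyclicity of trace,
\[
\mathrm{tr}\bigl(Q J^{-1}x^TJy\bigr)=\mathrm{tr}\bigl(Q^2x^TJy\bigr)=-\mathrm{tr}(x^TJy).
\]
Hence
\[
\Omega_B(x,y)=\tfrac12\mathrm{tr}(x^TJy)=\tfrac12\sum_k x_k^TJy_k.
\]
Finally,
\[
x_k^TJy_k=x_{1k}y_{2k}-x_{2k}y_{1k}=\det(x_k,y_k),
\]
which gives \eqref{eq:columns} exactly.

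\emph{Consequences.} Formula \eqref{eq:columns} shows that $B_\R=M_2(\R)$ is the orthogonal direct sum, with respect to $\Omega_B$, of the two planes consisting of matrices supported in column 1 and in column 2. On each plane $\Omega_B$ is half the standard area form. In particular $\Omega_B$ is nondegenerate.

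The only step I expect to be an obstacle is sign bookkeeping: identifying $Q=-J$, choosing the sign convention in $\mathrm{adj}(x)=J^{-1}x^TJ$, and arranging the prefactor $-\tfrac12$ so the result is $+\det$ and not $-\det$. I would confirm the sign on the test case $x=E_{11}$, $y=E_{21}$. There $\qi\bar xy$ should have trace $-1$, so $\Omega_B=\tfrac12=\tfrac12\det(e_1,e_2)$.
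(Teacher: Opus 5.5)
Your proof is correct, but it takes a more structural route than the paper.

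The paper evaluates $\Omega_B$ on the basis $(1,\qi,\qj,\qk)$ to get the Gram matrix \eqref{eq:Q}, and reads off alternation, integrality and nondegeneracy from it. It then proves \eqref{eq:columns} by substituting the splitting matrices into the right-hand side and checking that the same Gram matrix comes out.

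You argue differently at each step:
\begin{itemize}
\item alternation comes from the centrality of $\bar xx=\Nrd(x)$ together with $\Trd(\qi)=0$;
\item integrality comes from $\Trd(\cO)\subset2\Z$;
\item the column formula comes from $\bar x\mapsto\mathrm{adj}(x)=J^{-1}x^TJ$ and $\qi\mapsto J^{-1}$, which turn $\Omega_B$ into $\tfrac12\mathrm{tr}(x^TJy)$;
\item nondegeneracy then follows from the column formula.
\end{itemize}

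Two small points. First, identifying conjugation with the adjugate uses that $\Trd$ corresponds to the matrix trace (equivalently, that the images of $\qi,\qj,\qk$ are traceless), so cite this from the preceding lemma rather than phrasing it as if it were automatic. Second, no cyclicity of trace is needed, since $QJ^{-1}=Q^2=-\Id$ directly.

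What your route buys is an explanation of why the form splits column-wise. The choice $\qi\mapsto J^{-1}$ is exactly what converts the trace pairing into the standard area form on each column. It also makes invariance under left multiplication by all of $\SL_2(\R)$ transparent, as $\gamma^TJ\gamma=J$.

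What the paper's computation buys is the explicit integral Gram matrix, with entries $1$ and $3$. That matrix is used later to compute the fiber volume $\int_{T_r}\Omega_r^{2r}=(2r)!\,3^r$. It also shows the lattice is not unimodular, since its determinant is $9$. Your argument would have to supply that matrix separately.
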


\begin{proof}
In the ordered basis $(1,\qi,\qj,\qk)$, direct evaluation of \eqref{eq:quaternion-form} gives
\begin{equation}\label{eq:Q}
 Q=\begin{pmatrix}
 0&1&0&0\\
 -1&0&0&0\\
 0&0&0&3\\
 0&0&-3&0
 \end{pmatrix}.
\end{equation}
Thus $\Omega_B$ is alternating, integral on $\cO$, and nondegenerate. If $q\in\cO^1$, then $q^{-1}=\bar q\in\cO$, so left multiplication is a lattice automorphism. Moreover,
\[
\Omega_B(qx,qy)=-\frac12\Trd(\qi\bar x\bar q qy)=-\frac12\Trd(\qi\bar x y)=\Omega_B(x,y).
\]
Substituting the matrices in \eqref{eq:splitting} into the right-hand side of \eqref{eq:columns} gives the same matrix \eqref{eq:Q}, proving the formula. Left multiplication acts by the standard representation on each column, and both summands occur with the positive coefficient $1/2$.
\end{proof}

For an integer $r\ge1$, set
\begin{equation}\label{eq:torus-data}
V_r=B_\R^{\oplus r},\qquad\Lambda_r=\cO^{\oplus r},\qquad\Omega_r=\Omega_B\oplus\cdots\oplus\Omega_B.
\end{equation}
The diagonal left action of $\Gamma$ preserves $\Lambda_r$ and $\Omega_r$. Consequently, it acts by symplectomorphisms on the compact torus
\[
 T_r=V_r/\Lambda_r.
\]
Unimodularity is not required: the integral, nondegenerate form \eqref{eq:Q} already defines a symplectic form on the torus.

\section{A compatible complex structure and its Chern class}\label{sec:plane}

Let
\[
 F=(\HH\times\R^2)/\Gamma\longrightarrow\Sigma,
\]
with $\gamma\cdot(z,v)=(\gamma z,\gamma v)$, and equip $\R^2$ with the standard area form $\varepsilon=dx\wedge dy$. We construct a compatible complex structure on $F$ and determine its first Chern class, paying particular attention to the sign.

\begin{lemma}
For $z=u+\ii v\in\HH$, define
\begin{equation}\label{eq:Iz}
 I_z=\frac1v
 \begin{pmatrix}
 u&-(u^2+v^2)\\
 1&-u
 \end{pmatrix}.
\end{equation}
Then $I_z$ is $\varepsilon$-compatible and satisfies
\begin{equation}\label{eq:equivariance}
I_{\gamma z}=\gamma I_z\gamma^{-1}\qquad(\gamma\in\SL_2(\R)).
\end{equation}
Thus $I$ descends to a complex structure on $F$, and the resulting complex line bundle satisfies
\begin{equation}\label{eq:half-canonical}
 (F,I)^{\otimes2}\simeq K_\Sigma.
\end{equation}
In particular,
\[
2c_1(F,I)=c_1(K_\Sigma),\qquad\langle c_1(F,I),[\Sigma]\rangle=g-1.
\]
\end{lemma}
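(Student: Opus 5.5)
The plan is to recognize $I_z$ as the conjugate of the standard complex structure $J=\begin{pmatrix}0&-1\\1&0\end{pmatrix}$ by the upper-triangular element $g_z\in\SL_2(\R)$ sending $\ii$ to $z$, and then to identify $(F,I)$ with the line bundle on $\Sigma$ given by the automorphy factor $j(\gamma,z)=cz+d$.

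\textbf{Compatibility and equivariance.} Take $g_z=\begin{pmatrix}v^{1/2}&uv^{-1/2}\\0&v^{-1/2}\end{pmatrix}$, so that $g_z\ii=z$. A direct multiplication should give $g_zJg_z^{-1}=I_z$. Since $J$ is $\varepsilon$-compatible and $g_z$ preserves $\varepsilon$, it follows that $I_z$ is $\varepsilon$-compatible; concretely, $I_z^2=-1$, $\det I_z=1$, and $\varepsilon(e,I_ze)>0$.

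For \eqref{eq:equivariance}, note that $g_{\gamma z}^{-1}\gamma g_z$ fixes $\ii$, so it lies in $SO(2)$, which commutes with $J$. Hence
\[
\gamma I_z\gamma^{-1}=g_{\gamma z}\,(g_{\gamma z}^{-1}\gamma g_z)\,J\,(g_{\gamma z}^{-1}\gamma g_z)^{-1}\,g_{\gamma z}^{-1}=I_{\gamma z}.
\]
Therefore $I$ is $\Gamma$-invariant and descends to a complex structure on $F$.

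\textbf{The automorphy factor.} This is where the sign must be settled, and I expect it to be the main obstacle. First I check by direct computation that $I_z(z,1)^T=\ii\,(z,1)^T$. Then the map
\[
\phi_z:\C\to(\R^2,I_z),\qquad \lambda\mapsto \operatorname{Re}\bigl(\lambda\,(z,1)^T\bigr),
\]
is complex linear, because $I_z\operatorname{Re}(\lambda w)=\operatorname{Re}(\lambda\,\ii w)$ for $w=(z,1)^T$. It is an isomorphism because $\operatorname{Im}z\ne0$.

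Under this trivialization, a smooth section of $F$ is an equivariant map $s(z)=\operatorname{Re}\bigl(f(z)(z,1)^T\bigr)$. Using
\[
\gamma(z,1)^T=(cz+d)\,(\gamma z,1)^T,
\]
the condition $s(\gamma z)=\gamma s(z)$ becomes $f(\gamma z)=(cz+d)f(z)$. So $(F,I)$ is the line bundle with automorphy factor $cz+d$, that is, of weight one.

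On the other side, a $1$-form $f(z)\,dz$ is $\Gamma$-invariant exactly when $f(\gamma z)=(cz+d)^2f(z)$, since $d(\gamma z)=(cz+d)^{-2}dz$. So $K_\Sigma$ has automorphy factor $(cz+d)^2$. Tensor powers multiply automorphy factors, so $(F,I)^{\otimes2}\simeq K_\Sigma$ as smooth complex line bundles. This is all that is needed, since smooth line bundles are classified by $c_1$.

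The point needing care is that the eigenvalue $+\ii$ (rather than $-\ii$) on $(z,1)^T$ fixes the direction of $\phi_z$. Consequently the factor is $cz+d$ and not its inverse or conjugate. I would double-check this by testing a hyperbolic element, or by comparing with the orientation convention for $\varepsilon$.

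\textbf{Degree.} From $(F,I)^{\otimes2}\simeq K_\Sigma$ we get $2c_1(F,I)=c_1(K_\Sigma)$. Since $\langle c_1(K_\Sigma),[\Sigma]\rangle=2g-2$ and $H^2(\Sigma;\Z)\cong\Z$ is torsion-free, it follows that $\langle c_1(F,I),[\Sigma]\rangle=g-1$.
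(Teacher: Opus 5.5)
Your proposal is correct and follows the paper's proof at its core. Both use the $+\ii$-eigenvector $(z,1)^T$ of $I_z$, the identity $\gamma(z,1)^T=(cz+d)(\gamma z,1)^T$, and the comparison of the resulting weight-one multiplier with the multiplier $(cz+d)^2$ of $K_\Sigma$; your $\phi_z$ is exactly the inverse of the paper's map $w\mapsto w-\ii I_z w$ in the frame $(z,1)^T$. The only difference is that you get compatibility and equivariance from $I_z=g_zJg_z^{-1}$ and $g_{\gamma z}^{-1}\gamma g_z\in SO(2)$, whereas the paper checks positivity of the matrix of $\varepsilon(\cdot,I_z\cdot)$ directly and derives equivariance from the automorphy identity applied to $(z,1)^T$ and its complex conjugate.
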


\begin{proof}
The matrix \eqref{eq:Iz} has square $-\Id$. The symmetric matrix of the bilinear form $\varepsilon(\,\cdot\,,I_z\,\cdot\,)$ is
\begin{equation}\label{eq:compatible-metric}
 \frac1v
 \begin{pmatrix}
 1&-u\\
 -u&u^2+v^2
 \end{pmatrix}.
\end{equation}
Its upper-left entry is positive and its determinant equals $1$; hence it is positive definite.

The $+\ii$-eigenline of $I_z$ in $\C^2$ is spanned by
\[
 s(z)=\begin{pmatrix}z\\1\end{pmatrix}.
\]
If $\gamma=\left(\begin{smallmatrix}a&b\\c&d\end{smallmatrix}\right)\in\SL_2(\R)$, then
\begin{equation}\label{eq:automorphy}
 \gamma s(z)=(cz+d)s(\gamma z).
\end{equation}
Applying the conjugate identity on the $-\ii$-eigenline proves \eqref{eq:equivariance}.

The map
\[
 (\R^2,I_z)\longrightarrow\ker(I_z-\ii\Id),\qquad
 w\longmapsto w-\ii I_z w,
\]
is complex linear and $\Gamma$-equivariant. Hence $(F,I)$ identifies with the holomorphic eigenline bundle, whose quotient action has multiplier $cz+d$ by \eqref{eq:automorphy}. On the other hand,
\[
 \gamma'(z)=(cz+d)^{-2}.
\]
Thus the quotient action on $T\Sigma$ has multiplier $(cz+d)^{-2}$, whereas the action on $K_\Sigma$ has multiplier $(cz+d)^2$. Squaring the multiplier of $(F,I)$ proves \eqref{eq:half-canonical} as an isomorphism of complex line bundles, and hence yields the stated integral first Chern class identity.
\end{proof}

\begin{remark}[A curvature check of the sign]
The eigenline in the proof is a holomorphic subbundle of the flat complex rank-two bundle. In the holomorphic frame $s(z)$, the Hermitian metric induced by \eqref{eq:compatible-metric} has squared norm $h(z)=v$, up to a positive constant. Indeed, $s(z)=w-\ii I_z w$ for $w=(u,1)$, and $\varepsilon(w,I_z w)=v$. The metric descends because
\[
\operatorname{Im}(\gamma z)=v/|cz+d|^2,
\]
in agreement with \eqref{eq:automorphy}. Its first Chern form is
\[
-\frac{\ii}{2\pi}\partial\bar\partial\log v=\frac{du\wedge dv}{4\pi v^2}.
\]
Since the hyperbolic area of $\Sigma$ is $4\pi(g-1)$, integration again gives $\deg(F,I)=g-1$. This independently confirms the sign fixed by $\varepsilon$-compatibility.
\end{remark}

Let $E_r$ be the flat real symplectic bundle associated with \eqref{eq:torus-data}:
\[
 E_r=(\HH\times V_r)/\Gamma\longrightarrow\Sigma.
\]
Equip each of the $2r$ column planes in \cref{lem:form} with $I_z$. Their direct sum defines a $\Gamma$-equivariant complex structure $I^{(r)}$ compatible with $\Omega_r$. As complex vector bundles,
\begin{equation}\label{eq:vertical-chern}
(E_r,I^{(r)})\simeq(F,I)^{\oplus2r},\qquad c_1(E_r,I^{(r)})=r\,c_1(K_\Sigma).
\end{equation}
This decomposition takes place at the level of real representations and their associated complex vector bundles; it need not preserve the integral lattice, and the construction does not require it to do so.

\section{Flat torus bundles and the counterexample}\label{sec:bundle}

\subsection{The quotient and its symplectic forms}

For $r\ge1$, define
\begin{equation}\label{eq:Mr}
M_r=(\HH\times T_r)/\Gamma,\qquad\gamma\cdot(z,[v])=(\gamma z,[\gamma v]).
\end{equation}
Projection to the first factor defines a smooth torus bundle $q_r:M_r\to\Sigma$ with fiber $T_r\simeq T^{4r}$. Since the action of $\Gamma$ on $\HH$ is free and properly discontinuous, so is its diagonal action on $\HH\times T_r$. Thus $M_r$ is a smooth connected manifold. Its base and fiber are compact, so $M_r$ is closed. The fiberwise origins are fixed by the linear monodromy and define a section of $q_r$.

The constant form $\Omega_r$ on $V_r$ is invariant under both lattice translations and the action of $\Gamma$. It therefore descends to a closed two-form $\eta_r$ on $M_r$, vanishing on the flat horizontal distribution. Fix a positive area form $\sigma$ on $\Sigma$ and, for $t>0$, define
\begin{equation}\label{eq:omega}
 \omega_{r,t}=\eta_r+t\,q_r^*\sigma.
\end{equation}
The horizontal and vertical distributions are preserved by the quotient action. Relative to their direct sum, $\omega_{r,t}$ has blocks $t\sigma$ and $\Omega_r$, with zero mixed terms. It is therefore symplectic for every $t>0$.

Equivalently, the group $\Lambda_r\rtimes\Gamma$ acts on $\HH\times V_r$ by
\[
 (\lambda,\gamma)\cdot(z,v)=(\gamma z,\lambda+\gamma v),
\]
and its quotient is $M_r$. The action is free: a fixed point first forces $\gamma=1$, by freeness on $\HH$, and then $\lambda=0$. It is properly discontinuous because a compact subset of $\HH$ allows only finitely many $\gamma$, and for each of these, compact subsets of $V_r$ allow only finitely many lattice translations $\lambda$. Since $\HH\times V_r$ is contractible, this is the universal covering space. In particular,
\begin{equation}\label{eq:aspherical}
\pi_1(M_r)\simeq\Lambda_r\rtimes\Gamma,\qquad \pi_k(M_r)=0\quad(k\ge2).
\end{equation}

\subsection{The tangent bundle and first Chern degree}

Torus translations act trivially on tangent spaces, while the derivative of the linear monodromy is the representation on $V_r$. Thus the vertical tangent bundle is $q_r^*E_r$. The flat horizontal distribution yields a real vector-bundle splitting
\[
 TM_r\simeq q_r^*T\Sigma\oplus q_r^*E_r.
\]
Equip the horizontal summand with the complex structure $j_\Sigma$ of $\Sigma$ and the vertical summand with $I^{(r)}$. The resulting almost complex structure $J_r$ is compatible with $\omega_{r,t}$ for every $t>0$.

\begin{proposition}\label{prop:family}
For every integer $r\ge1$ and every $t>0$,
\begin{equation}\label{eq:chern-family}
c_1(M_r,\omega_{r,t})=(r-1)q_r^*c_1(K_\Sigma)\quad\text{in }H^2(M_r;\Z).
\end{equation}
Moreover,
\begin{equation}\label{eq:degree-family}
\bigl\langle c_1(M_r,\omega_{r,t})\smile[\omega_{r,t}]^{2r},[M_r]\bigr\rangle=(r-1)(2g-2)(2r)!\,3^r.
\end{equation}
\end{proposition}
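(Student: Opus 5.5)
We compute $c_1$ using the almost complex structure $J_r$ just constructed. Since $J_r$ is compatible with $\omega_{r,t}$, it computes $c_1(M_r,\omega_{r,t})$, and the splitting $TM_r\simeq q_r^*T\Sigma\oplus q_r^*E_r$ is a splitting of complex vector bundles for $J_r$. Whitney additivity gives
\[
c_1(M_r,\omega_{r,t})=q_r^*c_1(T\Sigma)+q_r^*c_1(E_r,I^{(r)}).
\]
By \eqref{eq:vertical-chern}, the second term is $r\,q_r^*c_1(K_\Sigma)$. Since $c_1(T\Sigma)=-c_1(K_\Sigma)$ integrally, we obtain \eqref{eq:chern-family}. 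The only subtlety is that \eqref{eq:vertical-chern} is an isomorphism of complex bundles and hence an integral identity. This follows because $(F,I)^{\otimes 2}\simeq K_\Sigma$ gives $2c_1(F,I)=c_1(K_\Sigma)$ in $H^2(\Sigma;\Z)\cong\Z$, which is torsion-free.

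For the degree, write $[\omega_{r,t}]=[\eta_r]+t\,q_r^*[\sigma]$. Since $c_1$ is pulled back from the surface $\Sigma$, any product $q_r^*c_1(K_\Sigma)\smile q_r^*[\sigma]=0$. Hence only the term $[\eta_r]^{2r}$ survives in the binomial expansion of $[\omega_{r,t}]^{2r}$, and
\[
\bigl\langle c_1\smile[\omega_{r,t}]^{2r},[M_r]\bigr\rangle=(r-1)\bigl\langle q_r^*c_1(K_\Sigma)\smile[\eta_r]^{2r},[M_r]\bigr\rangle.
\]
Represent $c_1(K_\Sigma)$ by a $2$-form $\kappa$ on $\Sigma$ with $\int_\Sigma\kappa=2g-2$. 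The form $q_r^*\kappa\wedge\eta_r^{2r}$ is a product of a horizontal top form and a vertical top form. Fiber integration (Fubini in local trivializations compatible with the flat structure) then gives
\[
\int_{M_r}q_r^*\kappa\wedge\eta_r^{2r}=\int_\Sigma\kappa\cdot\int_{T_r}\Omega_r^{2r}.
\]
This requires the orientation convention that $M_r$ is oriented by $\omega_{r,t}^{2r+1}$, i.e.\ base first, then fiber. Since $\sigma$ and $\Omega_r$ are even-degree forms, no sign arises from reordering.

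It remains to compute $\int_{T_r}\Omega_r^{2r}$. For a constant symplectic form, $\Omega^{m}=m!\,\mathrm{Pf}(Q)\,dx_1\wedge\cdots\wedge dx_{2m}$ in lattice coordinates. The fundamental domain of $\Lambda_r$ has unit volume in these coordinates. By \eqref{eq:Q}, the Pfaffian of $Q$ is $1\cdot 3=3$ on each summand, giving $3^r$ on $V_r$. Hence $\int_{T_r}\Omega_r^{2r}=(2r)!\,3^r$, and multiplying out yields \eqref{eq:degree-family}.

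The expected main obstacle is sign and orientation bookkeeping. One must confirm that the orientation induced by $\omega_{r,t}$ agrees with the product orientation for which $\int_{T_r}\Omega_r^{2r}>0$; this holds since the Pfaffian is positive. One must also confirm that the sign of $c_1(F,I)$ is the one fixed in the lemma of \cref{sec:plane}, which is taken as given. The rest is routine.
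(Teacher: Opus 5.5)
Your proof is correct and follows the paper's argument step for step: Whitney additivity for the $J_r$-complex splitting $q_r^*T\Sigma\oplus q_r^*E_r$ together with \eqref{eq:vertical-chern}, then discarding every term containing $q_r^*[\sigma]$, fiber integration with the symplectic (base-then-fiber) orientation, and the lattice-coordinate evaluation $\int_{T_r}\Omega_r^{2r}=(2r)!\,3^r$. Your appeal to torsion-freeness of $H^2(\Sigma;\Z)$ is harmless but unnecessary, since $c_1(L^{\otimes 2})=2c_1(L)$ already holds integrally.
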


\begin{proof}
By the complex tangent-bundle splitting and \eqref{eq:vertical-chern},
\[
\begin{aligned}
c_1(TM_r,J_r)=q_r^*\bigl(c_1(T\Sigma)+c_1(E_r,I^{(r)})\bigr)=(r-1)q_r^*c_1(K_\Sigma),
\end{aligned}
\]
which proves \eqref{eq:chern-family}. Indeed, at the level of smooth complex line bundles,
\begin{equation}\label{eq:determinant}
\det_{\C}(TM_r,J_r)\simeq q_r^*\bigl(K_\Sigma^{-1}\otimes F^{\otimes2r}\bigr)\simeq q_r^*K_\Sigma^{\otimes(r-1)}.
\end{equation}

Choose a closed real two-form $\theta$ representing $c_1(K_\Sigma)$, so that $\int_\Sigma\theta=2g-2$. Since $\theta\wedge\sigma=0$ on $\Sigma$, fiber integration gives
\[
\begin{aligned}
\int_{M_r}(r-1)q_r^*\theta\wedge\omega_{r,t}^{2r}=(r-1)\int_{M_r}q_r^*\theta\wedge\eta_r^{2r}=(r-1)(2g-2)\int_{T_r}\Omega_r^{2r}.
\end{aligned}
\]
All orientations are symplectic; the orientation of $M_r$ agrees with the product orientation of its base and fiber.

In the lattice coordinates $(x_0,\ldots,x_{4r-1})$ from the ordered quaternionic bases, \eqref{eq:Q} reads
\[
\Omega_r=\sum_{\ell=0}^{r-1}\bigl(dx_{4\ell}\wedge dx_{4\ell+1}+3\,dx_{4\ell+2}\wedge dx_{4\ell+3}\bigr).
\]
The fundamental lattice cube has coordinate volume $1$, so
\[
 \int_{T_r}\Omega_r^{2r}=(2r)!\,3^r.
\]
This proves \eqref{eq:degree-family}.
\end{proof}

\subsection{The absence of pseudoholomorphic spheres}

For completeness, we record the area argument, which requires no genericity assumption on the almost complex structure.

\begin{lemma}\label{lem:no-spheres}
Let $(X,\omega_X)$ be a symplectic manifold such that $\pi_2(X)=0$. If $J'$ is $\omega_X$-tame, every $J'$-holomorphic map $u:\mathbb P^1\to X$ is constant.
\end{lemma}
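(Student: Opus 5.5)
The plan is the standard energy--area identity. Let $u:\mathbb P^1\to X$ be $J'$-holomorphic. Since $\pi_2(X)=0$, the map $u$ is null-homotopic as a map of the sphere. Indeed, $\mathbb P^1\simeq S^2$ is simply connected, so $u$ lifts to any cover and defines a class in $\pi_2$ up to the $\pi_1$-action. Hence $u_*[\mathbb P^1]=0$ in $H_2(X;\Z)$ via the Hurewicz map. Since $\omega_X$ is closed, it follows that
\[
 \int_{\mathbb P^1}u^*\omega_X=\bigl\langle[\omega_X],u_*[\mathbb P^1]\bigr\rangle=0.
\]

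The second step is the pointwise positivity coming from tameness. Fix a local holomorphic coordinate $s+\ii t$ on $\mathbb P^1$. The Cauchy--Riemann equation gives $\partial_t u=J'\partial_s u$, so
\[
 u^*\omega_X=\omega_X(\partial_s u,J'\partial_s u)\,ds\wedge dt .
\]
Tameness means $\omega_X(\xi,J'\xi)>0$ for $\xi\neq0$. Thus the density is nonnegative everywhere and strictly positive wherever $du\neq0$. This holds for the complex orientation of $\mathbb P^1$.

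Combining the two steps, the integral of a nonnegative continuous density vanishes, so the density vanishes identically. Then $\partial_s u=0$ everywhere, and hence $du=0$. Since $\mathbb P^1$ is connected, $u$ is constant.

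No step is a real obstacle. The only point needing care is the topological one: passing from $\pi_2(X)=0$ to the vanishing of $\langle[\omega_X],u_*[\mathbb P^1]\rangle$. This needs no basepoint issues, because any map $S^2\to X$ represents an element of $\pi_2(X,x_0)$ after choosing a path to $x_0$, and its homology image is that of the Hurewicz image. No genericity or compactness theory is used, which matches the remark preceding the lemma. Applied to $X_n$, the lemma combines with \eqref{eq:aspherical}, which gives $\pi_2=0$.
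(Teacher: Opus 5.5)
Your proof is correct and follows essentially the same route as the paper: the condition $\pi_2(X)=0$ forces $\int_{\mathbb P^1}u^*\omega_X=0$, and tameness makes the area density nonnegative and strictly positive wherever $du\neq0$. The only cosmetic difference is that you obtain the vanishing by pairing $[\omega_X]$ with the Hurewicz image $u_*[\mathbb P^1]=0$, whereas the paper applies Stokes' theorem to the null-homotopy directly.
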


\begin{proof}
Since $\pi_2(X)=0$, the map $u$ is null-homotopic. As $\omega_X$ is closed, Stokes' theorem gives
\[
 \int_{\mathbb P^1}u^*\omega_X=0.
\]
In a positively oriented holomorphic coordinate $s+\ii t$ on the domain, the pseudoholomorphic equation yields
\[
u^*\omega_X(\partial_s,\partial_t)=\omega_X\bigl(du(\partial_s),J'du(\partial_s)\bigr)\ge0.
\]
If $u$ is nonconstant, then $du$ is nonzero somewhere. By tameness and continuity, the displayed expression is strictly positive on a neighborhood of that point, contradicting the vanishing of the total area.
\end{proof}

\begin{proof}[Proof of \cref{thm:main}]
Fix the surface $\Sigma=\Gamma\backslash\HH$ of \cref{sec:arithmetic} and choose $t>0$. For $r=2$, \cref{prop:family} gives
\[
c_1(M_2,\omega_{2,t})=q_2^*c_1(K_\Sigma),\qquad
\bigl\langle c_1(M_2,\omega_{2,t})\smile[\omega_{2,t}]^4,[M_2]\bigr\rangle=216(2g-2).
\]
Let $n\ge5$ and set $k=n-5$. On the unit-period torus $T^{2k}=(\R/\Z)^{2k}$, take the standard symplectic form
\[
\tau_k=\sum_{j=1}^k dx_j\wedge dy_j,\qquad \int_{T^{2k}}\tau_k^k=k!.
\]
For $k=0$, the torus is a point and the integral is understood to be $1$. Set
\begin{equation}\label{eq:general-construction}
\begin{aligned}
X_n=M_2\times T^{2k},\quad\omega_n=\operatorname{pr}_{M_2}^*\omega_{2,t}+\operatorname{pr}_T^*\tau_k\qquad\textrm{and}\qquad p_n=q_2\circ\operatorname{pr}_{M_2}.
\end{aligned}
\end{equation}
Then $X_n$ is a closed connected symplectic manifold of real dimension $2n$, and $p_n$ is a smooth $T^{2n-2}$-bundle over $\Sigma$. By \eqref{eq:aspherical}, its universal cover is
\[
\widetilde X_n\cong\HH\times\R^8\times\R^{2k}=\HH\times\R^{2n-2}.
\]
Equip $X_n$ with the product compatible almost complex structure $\widehat J_n=J_2\oplus J_{\mathrm{std}}$. The standard complex tangent bundle of $T^{2k}$ is trivial, so
\[
c_1(X_n,\omega_n)=\operatorname{pr}_{M_2}^*c_1(M_2,\omega_{2,t})=p_n^*c_1(K_\Sigma).
\]
Only the term involving four copies of $\omega_{2,t}$ and $k$ copies of $\tau_k$ contributes to the first Chern degree. Its binomial coefficient is $\binom{n-1}{4}$, so
\begin{align*}
\bigl\langle c_1(X_n,\omega_n)\smile[\omega_n]^{n-1},[X_n]\bigr\rangle=\binom{n-1}{4}\,216(2g-2)\,k!=9(n-1)!(2g-2)>0.
\end{align*}
Finally, $\pi_2(X_n)=0$, so \cref{lem:no-spheres} rules out nonconstant pseudoholomorphic spheres for every $\omega_n$-tame almost complex structure.
\end{proof}

\begin{remark}[A uniform quotient presentation]
The product construction \eqref{eq:general-construction} also has a direct presentation as a flat torus bundle. Set
\[
\widehat V_n=B_\R^{\oplus2}\oplus\R^{2n-10},\qquad\widehat\Lambda_n=\cO^{\oplus2}\oplus\Z^{2n-10},
\]
and let $\Gamma$ act by left multiplication on the quaternionic summands and trivially on the last summand. Then
\[
 X_n=\bigl(\HH\times(\widehat V_n/\widehat\Lambda_n)\bigr)/\Gamma.
\]
The invariant fiber form is $\Omega_B\oplus\Omega_B\oplus\tau_{n-5}$, with $\tau_{n-5}$ now denoting the corresponding constant form on $\R^{2n-10}$. Adding $t\,p_n^*\sigma$ to its descended form recovers $\omega_n$. The compatible complex tangent bundle is
\begin{equation}\label{eq:uniform-tangent}
(TX_n,\widehat J_n)\simeq p_n^*\bigl(T\Sigma\oplus F^{\oplus4}\oplus\underline{\C}^{\,n-5}\bigr).
\end{equation}
In particular, its complex determinant line bundle is $p_n^*K_\Sigma$ for every $n\ge5$.
\end{remark}

\section{Further consequences and scope}\label{sec:consequences}

\subsection{Nonintegrability of the displayed almost complex structures}

The compatible almost complex structures $J_r$ on $M_r$ and $\widehat J_n$ on $X_n$ are not integrable. Both assertions follow from the same local computation on a quaternionic column plane.

Choose real coordinates $(s,t)$ on one standard column plane. By \eqref{eq:Iz}, its vertical $(0,1)$-line is spanned by
\[
 W=\bar z\,\partial_s+\partial_t.
\]
The horizontal vector $\partial_{\bar z}$ is also of type $(0,1)$, whereas
\[
 [\partial_{\bar z},W]=\partial_s.
\]
The vector $\partial_s$ is not a complex multiple of $W$ and has no components in the other summands. Hence it does not lie in the $(0,1)$-subspace, so the $(0,1)$-distribution is not involutive for either $J_r$ or $\widehat J_n$. This shows nonintegrability only for the specified structures; it makes no claim about other complex structures on the underlying smooth manifolds.

\subsection{Numerical positivity and dimensional scope}

The positivity in \cref{thm:main} is precisely the numerical condition \eqref{eq:bdpp-condition}. By \eqref{eq:uniform-tangent}, the complex tangent bundle of $X_n$ is pulled back from a bundle over a Riemann surface. Consequently, in integral cohomology,
\[
c_j(TX_n,\widehat J_n)=0\quad(j\ge2),\qquad c_1(TX_n,\widehat J_n)^2=0.
\]
For every $n\ge5$, the class $c_1(X_n,\omega_n)$ is nonzero: the fiberwise origins define a section of $p_n$, so pullback from $\Sigma$ is injective. Nevertheless, its square vanishes, and therefore it cannot be a positive multiple of $[\omega_n]$. Moreover, $c_1(X_n,\omega_n)^n=0$, so this class cannot be represented by a symplectic form.

For $r=1$, \eqref{eq:determinant} gives a trivial complex determinant line bundle and hence
\[
 c_1(M_1,\omega_{1,t})=0.
\]
The manifold $M_r$ is the $r$-fold fiber product of $M_1\to\Sigma$ over the same base. Its vertical Chern contribution grows with $r$, while the contribution from $T\Sigma$ occurs only once. Thus the first positive-degree member of this family is $M_2$, in real dimension ten. Products with standard symplectic tori yield \cref{thm:main} for every $n\ge5$. This construction does not produce positive-degree examples in real dimensions six or eight and does not resolve the corresponding cases of BDPP's question.

\Needspace{12\baselineskip}
\section*{Acknowledgments}

The author used ChatGPT to assist with the exploration of the construction and the preparation of the manuscript. The author verified and completed all mathematical arguments and takes full responsibility for the content of this paper.


\begin{thebibliography}{99}
\raggedright
\small

\bibitem{BDPP13}
S.~Boucksom, J.-P.~Demailly, M.~P\u{a}un, and T.~Peternell,
\emph{The pseudo-effective cone of a compact K\"ahler manifold and varieties of negative Kodaira dimension},
J. Algebraic Geom. \textbf{22} (2013), no.~2, 201--248.
\href{https://doi.org/10.1090/S1056-3911-2012-00574-8}{doi:10.1090/S1056-3911-2012-00574-8}.

\bibitem{FP10}
J.~Fine and D.~Panov,
\emph{Hyperbolic geometry and non-K\"ahler manifolds with trivial canonical bundle},
Geom. Topol. \textbf{14} (2010), no.~3, 1723--1763.
\href{https://doi.org/10.2140/gt.2010.14.1723}{doi:10.2140/gt.2010.14.1723}.

\bibitem{MS12}
D.~McDuff and D.~Salamon,
\emph{$J$-holomorphic Curves and Symplectic Topology},
second edition, American Mathematical Society Colloquium Publications, vol.~52,
American Mathematical Society, Providence, RI, 2012.

\bibitem{Milnor58}
J.~Milnor,
\emph{On the existence of a connection with curvature zero},
Comment. Math. Helv. \textbf{32} (1958), 215--223.
\href{https://doi.org/10.1007/BF02564579}{doi:10.1007/BF02564579}.

\bibitem{VZ04}
E.~Viehweg and K.~Zuo,
\emph{A characterization of certain Shimura curves in the moduli stack of abelian varieties},
J. Differential Geom. \textbf{66} (2004), no.~2, 233--287.
\href{https://doi.org/10.4310/jdg/1102538611}{doi:10.4310/jdg/1102538611}.
\href{https://arxiv.org/abs/math/0207228}{arXiv:math/0207228}.

\end{thebibliography}
\end{document}